%&latex
%\documentclass[12pt,a4paper]{elsarticle}
\documentclass[11pt, preprint]{elsarticle}

\usepackage{fullpage}
\usepackage[cp1251]{inputenc}
\usepackage[T2A]{fontenc}       % [T2A,T1]
\usepackage{caption}
\captionsetup{labelsep=period,justification=centerlast,font={small,sf}}

\usepackage{graphicx}
\usepackage{amsmath}
\usepackage{amssymb}
\usepackage{amsthm}
\usepackage{color}

\clubpenalty=9999
\widowpenalty=9999
\allowdisplaybreaks[4]

\DeclareMathOperator{\Mob}{Mob}
%\DeclareCaptionLabelSeparator{period}{.}

\newlength{\ps}\setlength{\ps}{0.3577777778mm}
 % not needed in final version; make it {} (empty)

\newcommand{\Skip}[1]{}

% {0.0}

\interfootnotelinepenalty=10000

\newtheorem{Prop}{Proposition}

\newsavebox{\TmpBox}
\newcommand{\tmp}{}
\newlength{\tmplength}                    %

\newcommand{\Equa}[2]{%\marginpar{\small#1}
\begin{equation}#2\label{#1}\end{equation}%
}
%        NonLabelled equation
\newcommand{\equa}[1]{\[ #1 \]}

\newcommand{\ceq}{\mathrel{\raisebox{0.5pt}{:}\mspace{-3mu}=}}

\newcommand{\St}[2][]{{#2}_{#1}^{\star}}
%
%        Math. functions
%
\newcommand{\abs}[1]{\left\lvert#1\right\rvert}

\newcommand{\sgn}{\mathop{\rm sgn}\nolimits}
\newcommand{\Deg}[1]{{\ifmmode{#1}^\circ\else${#1}^\circ$\fi}}
\newcommand{\e}{{\mathrm{e}}}
\newcommand{\Exp}[1]{\e^{#1}}
\newcommand{\nvec}[1]{\mathbf{n}(#1)}
%
%       sqrt(-1):
\newcommand{\iu}{{\mathrm{i}}}
\newcommand{\ber}[2]{{\cal B}^{(#1)}_{#2}}
\newcommand{\Ber}[3]{\ber{#1}{#2}\left(#3\right)}

\renewcommand{\le}{\leqslant}

\renewcommand{\ge}{\geqslant}
\newcommand{\ige}{\,{\ge}\,}
\newcommand{\Keep}[3]{\left\{#1,\, #2,\, #3 \right\}}

\newcommand{\Eqref}[1]{\buildrel\eqref{#1}\over=}
%
%  > 0, < 0, >= 0,  etc   for in-line formulae: no $...$-delimiters required !
%    \GT{F(x,y)}  ==  $ F(x,y)\,{>}\,0 $
%
%
\newcommand{\GT}[1]{$#1>0$}
\newcommand{\GE}[1]{$#1\ge0$}
\newcommand{\LT}[1]{$#1<0$}
\newcommand{\LE}[1]{$#1\le0$}
\newcommand{\EQ}[1]{$#1=0$}
\newcommand{\NE}[1]{$#1\not=0$}
%
%            Others
\newcommand{\So}{\quad\Longrightarrow\quad}

\newcommand{\HM}{\hphantom{{-}}}

%%        Circles [ K(...) ]
\newcommand{\Kl}[1]{{\ifmmode{\cal K}_{#1}\else${\cal K}_{#1}$\fi}}

%
%  Refs to figs.
%
\def\FigDir{}
\def\figurename{Fig.}
\newcommand{\Figref}[1]{\ref{fig:#1}}
\newcommand{\RefFig}[2][]{\figurename\,\Figref{#2}{#1}} 
\newcommand{\Infigw}[2]{%  Width Id
\includegraphics[width=#1]{\FigDir#2.eps}}

\newcommand{\Bfig}[3]{%  {width}  {id}  (caption)
\setlength{\tmplength}{-#1}\addtolength{\tmplength}{\textwidth}%
%%\rule{.5\tmplength}{1mm}\rule{#1}{3mm}%
\parbox[b]{\textwidth}{%
\makebox[.5\tmplength][l]{}%
\includegraphics[width=#1]{\FigDir#2.eps}%
\makebox[.5\tmplength][r]{}%
}%
\caption{{\small #3}}\label{fig:#2}%
}

%==================================================================== MyMath.tex, end

%\newcommand{\quo}[1]{``#1''}  %{\glqq#1\grqq}

%\thispagestyle{empty}

\begin{document}

\author{A.\,I.~Kurnosenko}

%\ead{Alexey.K@dxdy.ru}
\address{Private research (Russian Federation)}

\title%{Inversions of conics}
{Two-point G$^2$ Hermite interpolation with spirals by inversion of conics: summary}

\begin{abstract}
The article completes the research of two-point G$^2$ Hermite interpolation
problem with spirals by inversion of conics.
A simple algorithm is proposed to construct a family of 4th degree rational spirals,
matching given G$^2$ Hermite data.
A possibility to reduce the degree to cubic is discussed.
% {\bf Keywords:}\,
% spiral, lens, rational curve, inversion, Moebius map.\\
% ~~\\
% {\bf 2000 MSC:} 53A04.
\end{abstract}

\begin{keyword}
conic \sep G2 Hermite interpolation \sep Moebius map \sep rational curve \sep rational cubic \sep spiral  \MSC 53A04
\end{keyword}
%\date{}
\maketitle{}

\section{Introduction}

This note is intended to complete our research \cite{AKparab,AKhyperb} 
of the problem of two-point G$^2$ Hermite interpolation with spirals by inversion of conics.
The review of the problem was given in~\cite{AKparab},
together with explanation of the general idea of applying inversion to construct a spiral interpolant.
M\"oebius maps of a parabolic arc have been considered.
In \cite{AKhyperb} the theory was augmented by including {\em long spirals}. 
The construction was based on another special kind of conic,
namely, a hyperbola with parallel tangents at the endpoints.

It is now clear that, for a given two-point G$^2$ data,
there exists a family of solutions, produced by involving other conics.
The questions naturally arise: could we propose to a designer a possibility to select
a curve among a family of shapes and curvature profiles?
Is there, among the family of rational quartic spirals, a curve, 
reducible to cubic? 
Even if the answers are not much positive, these questions should be answered.

The rest of the article is organized as follows.
In Sections \ref{sec:Extend} and \ref{sec:SpiralConic}
we consider conics with non-positive weights in
the standard rational form of a conic, and explore them for spirality.
Constructing the family of interpolants is described in Section~\ref{sec:Construction};
in particular, subsections \ref{sec:Given} and \ref{sec:StepByStep}
are supposed to be sufficient to design the corresponding script,
omitting theoretical details. 
Figures \Figref{Fig1-Exa}, \Figref{Fig2-LogSp}, \Figref{Fig4-Cum}, \Figref{Fig8-Cornu}
illustrate the families under discussion.
Finding rational cubic spiral is considered in Section~\ref{sec:Cubic}.

\begin{figure}[t]
\centering%
\Bfig{.88\textwidth}{Fig1-Exa}{%
Given G$^2$ data 
$\St[A]{{\cal K}} = \{-1,0,\Deg{-150},-0.4\}$ and 
$\St[B]{{\cal K}} = \{1,0,\Deg{-120},0.3\}$
(circles of curvature are shown dashed),
$\St\sigma=\Deg{90}$.
A family of spiral interpolants and their curvature plots ($s$~is arc length).
Curves are identified by the family parameter $\theta$ (in degrees).}
\end{figure}

\begin{figure}[t]
\centering%
\Bfig{.72\textwidth}{Fig2-LogSp}{%
Given G$^2$ data is borrowed from a logarithmic spiral, represented by dots.
A family of spiral interpolants $AB$ and their curvature plots.
%Shaded region is bilens, exact bound for all possible spiral interpolants.
}
\end{figure}

\section{Extention of rational quadratic B\'ezier representation of a conic}\label{sec:Extend}

Using 2nd degree rational curves in CAD applications
was restricted to continuous ones. Discontinuities, possibly occurring
in hyperbolas, were avoided. 
This type of conics proved useful to construct spirals.
To include it into the standard rational quadratic form of conic~\cite{Farin},
\Equa{Ratw0w1w2}{%
    \vec{r}(t)=\dfrac{P_0w_0(1{-}t)^2+2P_1w_1(1{-}t)t+P_2w_2t^2}%
                       {w_0(1{-}t)^2+2w_1(1{-}t)t+w_2t^2},\quad w_0=1,\quad w_2\not=0,
}
\Skip{%
Taking $k=-\frac{1}{\sqrt{\abs{w_2}}}-1$
the rest of the curve, corresponding to 
$0\ige t\ige{-\infty}$ and
${+\infty}\ige t\ige 1$, is reparametrized to $t\in[0,1]$. 
}
we assume non-positive weights $w_{1,2}$.
%Without loss of generality $w_0\ieq 1$ can be taken.
Linear rational reparametrization
%\hbox{$t \to \frac{(1+k)t}{1+kt}$} with  $k=\frac{1}{\sqrt{\abs{w_2}}}-1$ maps\dots\\ 
\equa{%
    t \to \frac{t}{t+(1-t)\sqrt{\abs{w_2}}}
} 
maps the segment $t\in[0;1]$
onto itself continuously, and replaces weights $\{1,w_1,w_2\}$ by\vspace{-.5\baselineskip}
\Equa{w1w2w3}{%
%   \{1,w_1 (1{+}k), w_2(1{+}k)^2 \}=
    \left\{1,\pm\dfrac{w_1 }{\sqrt{\abs{w_2}}},\sgn w_2 \right\}=\{1,w,j\}, \quad j=\pm 1.%,
    %\quad\text{if}\; k=\pm\dfrac{1}{\sqrt{\abs{w_2}}}-1.
}
Conics with parallel end tangents \cite[Sec.\,12.8]{Farin} can be included into consideration
by assuming~$w$ tending to zero, while the control point $P_1=(p,q)$ tends to infinity:
\Equa{PQinf}{%
    w\to 0, \quad \sqrt{p^2+q^2}\to\infty,\quad\text{products}\quad p_w= pw \text{~~and~~} q_w=qw\not=0
}
remaining finite.
With weights \eqref{w1w2w3}, and the {\em normalized position} of an arc \cite{AKparab},
namely, 
\equa{
       P_0=A=(-1,0),\qquad P_2=B=(1,0),
}       
Eq.~\eqref{Ratw0w1w2}
%\eqref{Ratw0w1w2}
looks like
%\equa{%  RatQuad}{%
%    %\hspace*{-7mm}%
%    x(t)=\dfrac{X(t)}{W(t)}=\dfrac{-(1{-}t)^2+2p_w(1{-}t)t+jt^2}{(1{-}t)^2+2w(1{-}t)t+jt^2},\quad
%    y(t)=\dfrac{Y(t)}{W(t)}=\dfrac{2qwt(1-t)}{(1{-}t)^2+2w(1{-}t)t+jt^2}.%,\quad w\lesseqgtr 0, \quad j=\pm1.
%}
\Equa{RatConic}{%
  \begin{array}{l}
    x(t)=\dfrac{X(t)}{W(t)},\\[1ex]y(t)=\dfrac{Y(t)}{W(t)},
  \end{array}
  \quad\mbox{where}\quad\left\{%
  \begin{array}{l}
      X(t)=-(1{-}t)^2+2p_w(1{-}t)t+jt^2,\\[1ex]% =(j{-}2p_w-1)t^2 + 2(1{+}p_w)t - 1
      Y(t)=\HM{}2q_wt(1-t),\\[1ex]
      W(t)=\HM{}(1{-}t)^2+2w(1{-}t)t+jt^2.%=(j{-}2w+1)t^2 + 2(w{-}1)t + 1.
  \end{array}\right.
}
The sides of the control polygon, $h_1=\abs{AP_1}$ and $h_2=\abs{P_1B}$,
also have weighted versions $h_{1_w}=\abs{wh_1}$ and $h_{2_w}=\abs{wh_2}$,
finite in the case of infinite control point~\eqref{PQinf}:
\Equa{h1h2}{
  \begin{array}{ll}
     h_1=\abs{AP_1}=\sqrt{(1+p)^2+q^2},&h_2=\abs{P_1B}=\sqrt{(1-p)^2+q^2},\\
     h_{1_w}=\sqrt{(w+p_w)^2+q_w^2},&h_{2_w}=\sqrt{(w-p_w)^2+q_w^2}.
  \end{array}
}
%The case, considered in \cite{AKparab}, was  $\{w=1, j=1\}$;
%and $\{w=0, j=-1\}$ in \cite{AKhyperb}.

\Skip{%
\equa{%
  \begin{array}{ll}
     x'=\dfrac{X'W-XW'}{W^2},\quad&x''=\dfrac{X''W-XW''}{W^4}-\dfrac{2(X'W-XW')W'}{W^3};\\
     y'=\dfrac{Y'W-YW'}{W^2},\quad&y''=\dfrac{Y''W-YW''}{W^4}-\dfrac{2(Y'W-YW')W'}{W^3}.
  \end{array}
}  
\equa{
      g(t)=\sqrt{x'^2+y'^2}=\dfrac{\sqrt{G(t)}}{W^2(t)},\quad  
      f(t)=y''x'-x''y'= -8j\dfrac{q_w}{W^3(t)},
}
where
\equa{%
  \begin{array}{l}
      G(t)=(X'W-XW')^2+(Y'W-YW')^2,\\
%     F(t)=(Y''X'{-}X''Y')W+(W''Y'{-}Y''W')X + (X''W'{-}W''X')Y=\const=-8jqw,\\
      G'(t)=2(X'W-XW')(X''W-XW'') + 2(Y'W-YW')(Y''W-YW'').%\\
%     F'(t)=(Y''X'{-}X''Y')W'+(W''Y'{-}Y''W')X' + (X''W'{-}W''X')Y' \equiv 0.
  \end{array}
}}  %Skip
\Skip{%
\Equa{ktdkt}{%
      k(t)=\dfrac{f(t)}{g^3(t)}=-8jq_w\dfrac{W^3}{G^{3/2}},\qquad
      k'_t=-12jq_w W^2\dfrac{2W'G-WG'}{G^{5/2}}.
}
\equa{%
   \begin{array}{lll}
              t=0: &&\\    
    X= -1,&Y=0,&W=1;\\       
    X'= 2(1{+}P),&Y'=2Q,&W'=2(w{-}1);\\       
    X''= 2(w_2{-}1{-}2P),&Y''=-4Q,&W=2(w_2{+}1{-}2w);\\  
    G=  ,& G'=; &\\
    \strut&&\\
    t=1:&&\\
    X= w_2,&Y=0,&W=w_2;\\
    X'= 2(w_2{-}P),&Y'=-2Q,&W'=2(w_2{-}w);\\
    X''= 2(w_2{-}1{-}2P),&Y''=-4Q,&W=2(w_2{+}1{-}2w);\\
    G=  ,& G'=; &    
   \end{array}
}}
Calculating corresponding derivatives yields the direction $\tau(t)$ of the tangent vector
$(\cos\tau(t),\sin\tau(t))^T$,
and the curvature function
\Equa{ktdkt}{%
      k(t)=-8jq_w\dfrac{W(t)^3}{G(t)^{3/2}},\quad\text{where}\quad
      G(t)=[X'(t)W(t)-X(t)W'(t)]^2+[Y'(t)W(t)-Y(t)W'(t)]^2.
%      k'_t=-12jq_w W^2\dfrac{2W'G-WG'}{G^{5/2}}.
}
Boundary G$^2$ data 
$\{-1,\,0,\,\alpha{=}\tau(0),\,a{=}k(0)\}$ and
$\{1,\,0,\,\beta{=}\tau(1),\,b{=}k(1)\}$
for conic arc~\eqref{RatConic} are:
\Equa{ConicG2}{%
\hspace*{-5mm}%
   \begin{array}{lll}
%    t=0: & t=1:\\[1ex] 
    \cos\alpha = \dfrac{w}{\abs{w}}\cdot\dfrac{1+p}{h_1}= \dfrac{w+p_w}{h_{1_w}},\;&
    \sin\alpha = \dfrac{w}{\abs{w}}\cdot\dfrac{q}{h_1}= \dfrac{q_w}{h_{1_w}},&
    a=-j\dfrac{w}{\abs{w}}\cdot \dfrac{q}{w^2 h_1^3}=-j\dfrac{q_w}{h_{1_w}^3};\\[1.5ex]
    \cos\beta = j \dfrac{w}{\abs{w}}\cdot\dfrac{1-p}{h_2}= j\dfrac{w-p_w}{h_{2_w}},\;&
    \sin\beta = -j\dfrac{w}{\abs{w}}\cdot\dfrac{q}{h_2}= -j\dfrac{q_w}{h_{2_w}},\quad&
    b=-\dfrac{w}{\abs{w}}\cdot\dfrac{q}{w^2 h_2^3}=-\dfrac{q_w}{h_{2_w}^3}.
   \end{array}
}
Fraction $\frac{w}{\abs{w}}$ for $w=0$ is considered as the corresponding limit, equal to~$\pm1$,
according to particular application.
%\eqref{Limww}. 

\section{Spiral conic arcs}\label{sec:SpiralConic}

The
implicit equation $a_{11}x^2+2a_{12}xy+a_{22}y^2+2a_{10}x+2a_{20}y+a_{00}=0$
of curve \eqref{RatConic} is
\Equa{Implicit}{%
     q_w^2x^2 - 2p_wq_w xy + (p_w^2+j-w^2)y^2+2wq_wy-q_w^2=0.
}
Invariants of the quadratic form are
\equa{%InvForm}{
     I_3=\left|
       \begin{array}{ccc}
          a_{11} & a_{12} & a_{10}\\
          a_{12} & a_{22} & a_{20}\\
          a_{10} & a_{20} & a_{00}
       \end{array}
       \right| = -j q_w^4,\qquad
   \begin{array}{l}
      I_2=
     \left|
      \begin{array}{cc}
         a_{11} & a_{12} \\
         a_{12} & a_{22} 
      \end{array}
     \right| =  (j-w^2)q_w^2,\\[2ex]
       I_1 = a_{11}+a_{22}=p_w^2+q_w^2+j-w^2.
    \end{array}
}
This is conic when \NE{I_3}, i.e. \NE{q_w}.\medskip

\noindent%
1. Let $j=1$. Then $I_3<0$, \ $I_2= q_w^2 (1{-}w^2)$, \ $I_1 = p_w^2+q_w^2+1-w^2$, i.\,e.
\equa{%Convex}{
   \begin{array}{llll}
          |w|<1:& \text{ellipse}& (I_1 I_3<0, \quad &I_2>0);\\
          |w|=1:& \text{parabola}& (I_3\neq 0,\quad  &I_2=0);\\ 
          |w|>1:& \text{hyperbola}& (I_3\neq 0,\quad &I_2<0).
   \end{array}
}
Conics with $j=1, w>0$ have been studied for spirality in \cite{FreyConic}.
Let us exclude non-spiral cases $w\le0$.
\begin{itemize}\setlength{\parindent}{0pt}
\item[a)]
Ellipse with \EQ{w} has (anti)parallel end tangents, is centered in the origin,
and the arc $AB$ includes one or two vertices between endpoints.
\Equa{NonSpirals}{%
   \parbox[c]{.9\linewidth}{\Infigw{.94\linewidth}{Pic09}}
% \parbox --- proper positioning of equation number
}
\item[b)]
For an ellipse with \ $-1<w<0$ \ we see from \eqref{ConicG2},
that either $\abs{\alpha}>\pi/2$
($\cos\alpha<0$ when $p>{-1}$),
or $\abs{\beta}>\pi/2$ (when $p<1$), or both.
Such elliptic arc definitely includes vertices.
\item[c)]
Parabola with $w=-1$ includes the infinite point at $t=\frac12$, producing a cusp under inversion
(dotted curve).
\item[d)]
Hyperbolic infinite points are acceptable.
But both roots
$t_{1,2}=\frac12\left(1\pm\sqrt{\frac{w+1}{w-1}}\right)$
of the equation $W(t)=0$ are inside the interval $(0;1)$. The curve includes the entire branch
$t\in(t_1;t_2)$ with its vertex.
\end{itemize}

\noindent%
2. Let  $j=-1$. Then $I_3\neq 0$, \ $I_2<0$.   
This is a hyperbola 
% (\Reffig{Group3}).
with discontinuities at
\Equa{DiscHyp}{%
    \begin{array}{l}
        t_1=\dfrac{w{-}1 - \sqrt{1+w^2}}{2w},\quad
        t_2=\dfrac{w{-}1+\sqrt{1+w^2}}{2w}=\dfrac1{1{-}w+\sqrt{1+w^2}};\\
      \aligned   
        w>0:&&\quad  t_1<0<&t_2<1;\\
        w<0:&&\quad  0<&t_2<1<t_1;\\
        w=0:&&\quad    &t_2=\tfrac12,\quad t_1=\infty.%\quad\text{(considered in \cite{AKhyperb})}.
      \endaligned 
    \end{array}
}
Exactly one discontinuity,~$t_2$, falls into the interval $t\in(0;1)$.
The hyperbola may have no vertices.\medskip

The above considerations have excluded some evident non-spiral cases.
The rest requires more detailed analysis for the absence of vertices.

Let \EQ{K_{1,2}(x,y)} be equations of two circles,
\Equa{K12xy}{%
   \begin{array}{l}
       K_1(x,y)=(x+x_w)^2+y^2 -r_w^2,\\[1ex]
       K_2(x,y)=(x-x_w)^2+y^2 -r_w^2,
   \end{array}
\qquad r_w=\dfrac{j}{2w^2},\quad x_w=1{-}r_w,
}
centered at $(\mp x_w,0)$, both of radius $\abs{r_w}$.
The first one (the left one) passes through point $A=(-1,0)$, the second through $B=(1,0)$.
If $r_w={+1}$, two circles are coincident with the unit circle.
\begin{Prop}
\begin{subequations}\label{LimCirc}
Convex conic $(j=1)$ is vertex-free if
\Equa{LimCirc1}{%
     w\ge\frac{1}{\sqrt{2}},\quad p\not=0,\quad K_1(p,q)\cdot K_2(p,q) \le 0.
}     
Discontinuous conic $(j=-1)$ is vertex-free if
\Equa{LimCirc2}{%
    \begin{array}{lll}
       \text{either}&w>0, & K_1(p,q)\le 0,\\
       \text{or} &w<0,    & K_2(p,q)\le 0.
    \end{array}
}
\end{subequations}
\end{Prop}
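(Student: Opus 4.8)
The plan is to characterize vertices of the conic arc analytically and then show that the stated circle conditions preclude them. A vertex occurs where the curvature $k(t)$ has a critical point, i.e. where $k'_t=0$. From the commented-out formula $k(t)=-8jq_w W^3/G^{3/2}$ one gets $k'_t \propto 2W'G - WG'$, so vertices in the open interval correspond to roots $t\in(0,1)$ of the polynomial $H(t)\ceq 2W'(t)G(t)-W(t)G'(t)$. So the first step is to compute $G(t)$ explicitly from \eqref{ktdkt} and form $H(t)$; since $W$ is quadratic and $G$ is quartic, $H$ is a polynomial of modest degree whose sign on $(0,1)$ we must control. The key is that $H$ should factor in a way that exposes the two circles $K_1,K_2$ of \eqref{K12xy}: I expect $H(t)$ (or the relevant factor of it) to be expressible, after clearing the common factor $q_w$, as a product or a sign-definite combination involving $K_1(p,q)$ and $K_2(p,q)$ together with a factor that is sign-definite on $(0,1)$ under the hypothesis $w\ge 1/\sqrt2$ (resp.\ $w>0$ or $w<0$ in the $j=-1$ case).

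Concretely, I would proceed as follows. First, substitute $X,Y,W$ from \eqref{RatConic} into the definition of $G$ in \eqref{ktdkt}, expand, and identify $G$ as a quartic in $t$ with coefficients polynomial in $p_w,q_w,w,j$. Second, form $H(t)=2W'G-WG'$ and simplify; I anticipate a cancellation dropping the degree, so that $H$ has the form $q_w^2\,(\text{linear or quadratic in }t)\cdot(\text{quadratic in }t)$ or similar. Third, interpret the quadratic-in-$t$ factor: evaluating the control-polygon geometry, the circle $K_1$ is exactly the locus of centers of curvature data at $A$ forced to equal the curvature radius (it passes through $A$), and $K_2$ the analogue at $B$; I expect the vanishing of $H$ at a candidate interior $t$ to force $P_1=(p,q)$ to lie on one of these circles, or between them in the sense that $K_1(p,q)$ and $K_2(p,q)$ have opposite signs. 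Fourth, use \eqref{DiscHyp} and the earlier case analysis to locate where $W(t)$ vanishes (for $j=-1$ there is exactly one root $t_2\in(0,1)$, for $j=1,\ w\ge1/\sqrt2$ none), and check that the factor of $H$ responsible for sign changes keeps a constant sign on each subinterval on which the arc actually lives, so that no genuine vertex of the traced arc occurs.

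For the $j=1$ convex case, the bound $w\ge 1/\sqrt2$ is precisely $r_w=\tfrac1{2w^2}\le 1$, i.e.\ $x_w=1-r_w\ge 0$, which places the two circle centers on opposite sides of (or at) the origin and makes the two circles overlap in a lens containing the chord $AB$; the condition $K_1(p,q)K_2(p,q)\le0$ says $P_1$ lies in that lens, and $p\ne0$ excludes the symmetric position that would create a central vertex (the excluded ellipse case (a)). For $j=-1$, there is no lower bound on $|w|$ but the sign of $w$ selects which circle's interior $P_1$ must lie in; here I would lean on \eqref{DiscHyp} to argue the single interior discontinuity $t_2$ separates the traced arc from the branch carrying any vertex.

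The main obstacle I foresee is the factorization in step three: showing that $H(t)$, a priori a degree-five or -six polynomial in $t$ with messy coefficients, collapses to a product in which $K_1(p,q)$ and $K_2(p,q)$ appear as literal factors (rather than merely bounding its sign). If the clean factorization does not materialize, the fallback is to evaluate $H$ at the endpoints $t=0,1$ and at $t_2$, relate those values directly to $K_1(p,q)$ and $K_2(p,q)$ via \eqref{ConicG2}, and use the intermediate value theorem together with a bound on the number of sign changes (at most the degree of the reduced $H$) to conclude that $H$ has no root in the relevant open subinterval. Either way, once the sign of $H$ on the traced arc is pinned down under the hypotheses \eqref{LimCirc1}–\eqref{LimCirc2}, vertex-freeness follows and the proposition is proved.
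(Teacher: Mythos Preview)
Your plan diverges from the paper's argument and, in its present form, has real gaps.

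The paper does not analyse the full polynomial $H(t)=2W'G-WG'$. It computes $k'(t)$ only at the two endpoints and obtains
\[
   k'(0)=-\frac{6jq}{\lvert w\rvert\,h_1^{5}}\,K_1(p,q),\qquad
   k'(1)=\frac{6jq}{\lvert w\rvert\,h_2^{5}}\,K_2(p,q).
\]
So $K_1(p,q)$ and $K_2(p,q)$ are, up to sign-definite factors, the \emph{endpoint values} of $k'$; they are constants in control-point space, not factors of $H$ as a polynomial in~$t$. The clean factorization you are hoping for in step three will not appear, and your fallback (sign of $H$ at $t=0,1$ plus a degree bound) is insufficient on its own: knowing $\deg H\le 5$ and two endpoint signs does not exclude interior roots.

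What closes the argument for $j=-1$ in the paper is a qualitative fact you do not invoke: each branch of a hyperbola carries exactly one curvature extremum (its geometric vertex), so on each of the two pieces $[0,t_2)$ and $(t_2,1]$ of \eqref{DiscHyp} the curvature is unimodal between its endpoint value and~$0$ at the asymptote. Thus the sign of $k'$ at the finite endpoint alone decides whether that branch's vertex lies inside the parametrised piece. This replaces any root-counting for~$H$.

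You are also missing a step that explains why \eqref{LimCirc2} mentions only one circle. For $j=-1$ the two discs $K_1\le0$ and $K_2\le0$ are disjoint (their centres sit at $\mp(1+\tfrac{1}{2w^{2}})$, farther apart than twice the common radius), so $K_1(p,q)\le0$ automatically forces $K_2(p,q)>0$; the paper uses this to get the second-branch condition for free once the first is assumed.

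Finally, the convex case \eqref{LimCirc1} is not proved in the paper; it is quoted from Frey--Field \cite{FreyConic}, Theorem~9.1. Your unified treatment of both cases via $H(t)$ would be new content relative to the paper, but it would need the missing unimodality input (or an explicit factorisation you have not yet produced) to go through.
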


\begin{figure}[t]
\centering%
\Bfig{1.\textwidth}{Fig3-K1K2}{Bounding circles \eqref{K12xy}}%
\end{figure}

Four examples of convex cases are shown at the left side of \RefFig{Fig3-K1K2}.
Condition $w\ge\frac{1}{\sqrt{2}}$ is equivalent to $0<r_w\le 1$.
The proof of~\eqref{LimCirc1} is given in \cite{FreyConic} as Theorem\,9.1.

\begin{proof}[{\bf Proof}]
To prove \eqref{LimCirc2}, we first link expressions for $K_{1,2}(p,q)$,
\equa{
  K_1(p,q)=\left[p+\left(1{-}r_w\right)\right]^2+q^2 -r_w^2
          %=(p+1)^2+q^2-2r_w(p{+}1)
          =\frac{w^2h_1^2-j(1{+}p)}{w^2},\quad
  K_2(p,q)=\ldots=\frac{w^2h_2^2-j(1{-}p)}{w^2},        
}
with derivatives of curvature \eqref{ktdkt}:
\equa{%CrvDeriv}{%
     k'(0)=-6jq\dfrac{jw^2 h_1^2-j(p+1)}{\abs{w}^3h_1^5}
     \equiv -6jq\dfrac{K_1(p,q)}{\abs{w} h_1^5},  \qquad
     k'(1)= 6jq\dfrac{jw^2 h_2^2-j(1-p)}{\abs{w}^3h_2^5}
     \equiv 6jq\dfrac{K_2(p,q)}{\abs{w}h_2^5}.
}

Let $w>0$, $q>0$.
There is exactly one discontinuity \eqref{DiscHyp} at $t=t_2\in(0,1)$.
The curve consists of two infinite branches, $t\in[0;t_2)$ and $t\in(t_2;1]$.
Positive curvature $k(0)=a>0$ \eqref{ConicG2} approaches zero as the curve approaches
the asymptota ($t\to t_2$),\\
\indent
(a)~either increasing $[k'(0)>0]$ up to the vertex, and then decreasing to zero;\\%
\indent
(b)~or monotonously decreasing to zero $[k'(0)\le 0]$.\\%
The case (b) means the absence of vertex in $t\in[0,t_2]$.
Decreasing $[k'(t)\le 0]$ must continue in $t\in(t_2;1]$ up to $t=1$,
not turning into increasing:
\equa{%
   k'(0)\le 0 \Longleftrightarrow K_1(p,q)\le 0,\qquad
   k'(1)\le 0 \Longleftrightarrow K_2(p,q)\ge 0.
}
Provided $K_1\le0$, the condition $K_2>0$
%the latter 
holds automatically:
any point inside the left circle is outside the right one. % (\GT{K_2}). 

With \GT{w}, but \LT{q}, negative curvature \LT{a=k(0)} must increase to zero when $t\to t_2$,
and must continue increasing in $(t_2;\,1]$: 
\GE{k'(0)} and \GE{k'(1)} also yield \LE{K_1(p,q)} and \GE{K_2(p,q)}.

For \LT{w} we obtain similarly \LE{K_2(p,q)}, and automatical \GT{K_1(p,q)}.
\end{proof}

To distinguish the cases of increasing/decreasing curvature,
consider the difference $b-a$ of end curvatures, whose sign, under condition of curvature monotonicity,
is M\"oebius invariant:
%In the below simplifications $j=j^3$ was used:
\equa{%
  \aligned
    &      
     \makebox[0mm][l]{$b-a= -\dfrac{w}{\abs{w}}\dfrac{q}{w^2 h_1^3 h_2^3}\left(h_1^3-{j}h_2^3\right);$}\\
    &j=+1\;(w>0){:}       && \sgn(h_1^3{-}h_2^3)=\sgn p  &\Longrightarrow\quad\sgn(&b-a)=-\sgn(pq);\\
    &j=-1\;(w\lessgtr0){:}&& \sgn(h_1^3{+}h_2^3)=1       &\Longrightarrow\quad\sgn(&b-a)=-\sgn(qw).
 \endaligned
}
The unit circle $p^2+q^2\le1$ covers all smaller circles~\eqref{LimCirc1}.
Two halfplanes $\abs{p}>1$ cover all circles~\eqref{LimCirc2}.
Two shaded sectors (including circular boundaries), and two shaded quadrants
cut therefrom the regions,
where a control point $(p,q)$ could be located, 
generating a spiral conic arc with {\em increasing} curvature, $\sgn(b-a)=1$:
\Equa{Wsign}{%         220pt
   b>a:\quad\parbox[c]{200pt}{\Infigw{200pt}{Pic13}}% \parbox --- proper positioning of equation number
}

\section{Construction of the family of spirals}\label{sec:Construction}

\subsection{Given data}\label{sec:Given}

The set of G$^2$ data, denoted as ${\cal K}= \{x,\,y,\,\tau,\,k\}$,
includes point $(x,y)$, direction of unit tangent $\nvec{\tau}=(\cos\tau,\,\sin\tau)$,
and curvature~$k$ at this point.
Given
% two-point 
G$^2$ data is assumed to be {\em normalized}:
% \cite[Sec.\,2.1]{AKparab}:
\Equa{GivenKK}{%
   \St[A]{{\cal K}} = \{-1,0,\St\alpha,\St a\},\qquad \St[B]{{\cal K}} = \{1,0,\St\beta,\St b\}.
}
Superscript $\St{}$ is used to denote given data or derived quantities, such as
\Equa{DefQ}{%
  \St[1]{g}=\St a+\sin\St\alpha,\quad \St[2]g=\St b-\sin\St\beta,\qquad 
  \St Q=\St[1]{g}\St[2]{g}+\sin^2\dfrac{\St\alpha+\St\beta}{2}<0.
}
Condition \LT{\St Q} means that a non-biarc spiral, matching given data, exists
(see \cite[St.\,2]{AKparab}).

In the following sections only the case of increasing curvature
is considered.
For decreasing curvature it is proposed to apply symmetry about the $x$-axis
%convert given data to the symmetric case 
by replacing
\Equa{MakeIncr}{
%   \text{if~~} \St a>\St b:\quad 
   \St\alpha\ceq{-\St\alpha},\quad
   \St\beta\ceq{-\St\beta},\quad
   \St a \ceq {-\St a},\quad
   \St b \ceq {-\St b}\quad
   \left[\St[1]g \ceq {-\St[1]g},\quad
   \St[2]g \ceq {-\St[2]g}\right].
}
For increasing curvature $\St[1]g<0$, $\St[2]g>0$ (\cite[St.\,6]{AKparab}).

After bringing boundary angles $\St\alpha,\St\beta$ to the range $(-\pi;\pi]$
the condition $\St\alpha+\St\beta>0$ should be verified 
(Vogt's theorem, see \cite[Sec.\,2]{AKhyperb}).
%\cite[Th.\,1]{PomiLong}
If it holds, {\em a short spiral} exists, matching given data~\eqref{GivenKK}.
Otherwise a spiral is forced to make a turn near one of the endpoints,
thus becoming {\em long}. 
Continuous ({\em cumulative}) definition of boundary angles (see, e.\,g., Sec.\,3.3 in \cite{Bilens})
requires correction,
either $\St\alpha\ceq\St\alpha+2\pi$, or $\St\beta\ceq\St\beta+2\pi$.
%
%\enlargethispage{1\baselineskip}%   try to prevent the footnote moved to the next page
%
We do not need to know, which one should be applied, and do not apply it to $\St\alpha,\St\beta$.
%\footnote{%
%Sometimes it can be detected as follows.
%If, e.\,g., increasing curvature $\St a\le k(s)\le \St b$ of a long spiral arc 
%remains negative, $\St a < \St b\le 0$, then
%total turning angle of the curve is $\St\beta-\St\alpha=\int\limits_0^S k(s) d\,s<0$,
%and $\St\alpha$ is to be corrected: $\St\alpha\ceq\St\alpha+2\pi$, making both
%$\St\alpha+\St\beta>0$, and $\St\beta-\St\alpha<0$.
%If $0\le\St a <\St b$, the exact cumulative value of $\St\beta$ is $\St\beta+2\pi$,
%with $\St\alpha$ remaining in $(-\pi;\pi]$.
%}.
It is sufficient to correct the value of the invariant $\St\sigma$,
and to know exact values of sines and cosines of half-angles 
$\frac{\St\alpha\pm\St\beta}2$.
Define therefore\vspace{-\baselineskip}
\Equa{DefSigma}{%
  \aligned
   &\text{if~~}\St\alpha+\St\beta>0: &&\St\sigma = \St\alpha+\St\beta,
                                     &&\St\gamma = \frac{\St\alpha-\St\beta}2,
                                     %&&\St\omega = \frac{\St\sigma}2;
                                     \\
   &\text{if~~}\St\alpha+\St\beta\le0: &&\St\sigma = \St\alpha+\St\beta+2\pi,
                                     &&\St\gamma = \frac{\St\alpha-\St\beta}2\pm\pi,
                                     %&&\St\omega = \frac{\St\sigma}2,
  \endaligned  
  \qquad\St\omega = \frac{\St\sigma}2,
}
making $\St\sigma$ strictly positive.
% (as increasing curvature requires).
The~${\pm}$ choice in the redefinition of $\St\gamma$
% serves only to bring $\St\gamma$ to the standard range $(-\pi;\pi)$, and 
does not affect \eqref{LambdaNew}.
%\cite[Th.\,1]{PomiLong}.

\begin{figure}[t]
\centering%
\Bfig{.8\textwidth}{Fig4-Cum}{%
Example of given data, producing two kinds of long spirals.
% with cumulative boundary angles
% $\{\St\alpha{+}2\pi;\:\St\beta\}$ and $\{\St\alpha;\:\St\beta{+}2\pi\}$.
}%
\end{figure}

All spirals, found in \RefFig{Fig1-Exa}, correspond to the correction $\St\alpha\ceq\St\alpha+2\pi$
(they intersect the left complement of the chord, $x<{-1},\,y=0$).
In \RefFig{Fig4-Cum} we see long spirals of both kinds, with
$\{\St\alpha+2\pi,\:\St\beta\}$ and  $\{\St\alpha,\:\St\beta+2\pi\}$.
The gap between two subfamilies is due to rejection of the discontinuous solution.
Spirals in \RefFig{Fig2-LogSp} are short.

In the same manner as in \cite{AKparab}, 
we find a conic arc, sharing invariants $\sigma$ and $Q$ with given data:
\Equa{EQmain}{%
       \sigma(p,q,w,j)=\St\sigma,\qquad Q(p,q,w,j)=\St Q.
}
The sought for spiral $\bar{z}(t)$ will be found as the M\"obius map
of the conic~$z(t)=x(t)+\iu y(t)$~\eqref{RatConic},
\Equa{Moebius}{%
     \bar{z}(t)=\Mob(z(t);z_0)\equiv\dfrac{z_0+z(t)}{1+z_0 z(t)},\quad\text{where}\quad
    z_0=\dfrac{r_0\Exp{\iu\lambda_0}-1}{r_0\Exp{\iu\lambda_0}+1}=
    \frac{r_0-r_0^{-1}+2\iu\sin\lambda_0}{r_0+r_0^{-1}+2\cos\lambda_0}%,=\frac{X_0+\iu Y_0}{W_0}.
}
(see Proposition\,1 in \cite{AKparab}). The parameters of the map are
\Equa{Lambda12}{%
    \begin{array}{l}
     \lambda_0=\St\alpha{-}\alpha \equiv \beta{-}\St\beta\pmod{2\pi},\\[1.5ex]
     r_0=r_{01}=r_{02}{:}\quad
         r_{01}=\dfrac{a +\sin\alpha}{\St[1]g},\quad
         r_{02}=\dfrac{\St[2]g}{b - \sin\beta},
    \end{array}
}
with $\alpha,\beta,a,b$ defined from \eqref{ConicG2}.
Two versions of $\lambda_0$ become equivalent as soon as the first equation 
in~\eqref{EQmain} is satisfied. Satisfying the second one equates $r_{01}$ and $r_{02}$.

\subsection{Defining M\"obius invariant  $\sigma$ of a conic arc}

Now we define invariant $\sigma$ (lens' angular width) for a conic
with increasing curvature and control parameters $p,q,j$.
Boundary angles $\alpha,\beta$ \eqref{ConicG2},
being in the interval $(-\pi,\pi)$,
define exactly $\sigma=\alpha{+}\beta$ for a short arc of conic ($j=1$).
As established in \cite[Prop.\,3]{AKparab},
the locus of control points $(p,q)$, yielding $\sigma=\St\sigma$,
is the part of the hyperbola
\Equa{LocusEq}{%
   H(p,q;\St\sigma)=0,\quad q\ne0,\quad\text{where}\quad
  %H(p,q;\St\sigma)\buildrel{\rm def}\over=\sin\St\sigma(1-p^2+q^2)+2pq\cos\St\sigma,
   H(p,q;\St\sigma)=\sin\St\sigma(1-p^2+q^2)+2pq\cos\St\sigma,
}
lying in quadrants II, IV (i.\,e. $pq<0$,
the spirality being possible only within the unit circle).
The part of the hyperbola in quadrants I, III was useless,
when we worked with convex conic ($j={+1}$).
But it becomes useful as soon as discontinuous conics ($j={-1}$) are included into consideration:
\Equa{LocusFig}{%
   \parbox{360pt}{\Infigw{360pt}{Pic22}}%
}
\begin{Prop}
Conic~\eqref{RatConic} with $j=-1$ and the control point $(p,q)$ such that
\equa{
    pq>0, \quad H(p,q;\St\sigma)=0,\quad 0<\St\sigma<\pi,
}
provides the value of invariant $\sigma=\St\sigma$.
\end{Prop}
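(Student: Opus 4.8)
The plan is to reduce the statement to the convex case, which is already settled in \cite[Prop.\,3]{AKparab}, by tracking what the sign $j$ does to the boundary data \eqref{ConicG2}. For fixed control parameters $p,q,w$, passing from $j=1$ to $j=-1$ leaves $\alpha$, the end curvature $a$, and the polygon sides $h_1,h_2$ untouched, while it sends $(\cos\beta,\sin\beta)\mapsto-(\cos\beta,\sin\beta)$, i.e. $\beta\mapsto\beta+\pi$; hence the end-tangent sum $\alpha+\beta$ — and, for a short arc, the lens width $\sigma$ — increases by~$\pi$. Multiplying the two entries of \eqref{ConicG2} for $j=-1$ one gets
\[
  \Exp{\iu(\alpha+\beta)}
    =\frac{\bigl[(1+p)+\iu q\bigr]\,\bigl[(p-1)+\iu q\bigr]}{h_1h_2}
    =\frac{(p^2-1-q^2)+2\iu pq}{h_1h_2},
\]
so that $\cos(\alpha+\beta)=(p^2-1-q^2)/(h_1h_2)$ and $\sin(\alpha+\beta)=2pq/(h_1h_2)$; the factor $w/\abs{w}$ squares to $1$ and drops out, so the sign of $w$ is irrelevant here.

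Next I would record the elementary identity obtained by substituting these two expressions,
\[
  H(p,q;\St\sigma)=\sin\St\sigma\,(1-p^2+q^2)+2pq\cos\St\sigma
                  =h_1h_2\,\sin\!\bigl((\alpha+\beta)-\St\sigma\bigr),
\]
valid whenever $pq\ne0$ (which forces $h_1h_2\ne0$). Thus the hypothesis $H(p,q;\St\sigma)=0$ is equivalent to $\alpha+\beta\equiv\St\sigma\pmod\pi$. To promote this to a congruence modulo $2\pi$ and fix the representative I use $pq>0$: then $\sin(\alpha+\beta)=2pq/(h_1h_2)>0$, so the reduction of $\alpha+\beta$ modulo $2\pi$ lies in $(0,\pi)$, and since $0<\St\sigma<\pi$ it can only be $\St\sigma$ itself. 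It remains to confirm that the M\"obius invariant $\sigma$ of the arc coincides with this representative of $\alpha+\beta$, which is also the point where one connects with \eqref{LocusEq}: the quadrants II, IV branch of $H(\cdot\,;\St\sigma)=0$ has $\sin(\alpha+\beta)<0$ (the branch that served the convex conics), whereas the quadrants I, III branch is exactly the one carrying the extra $\pi$ supplied by $j=-1$.

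The genuinely delicate step — and the one I expect to be the main obstacle — is precisely this last point: the $2\pi$ bookkeeping built into the \emph{definition} of $\sigma$ for a \emph{discontinuous} conic arc. Unlike the convex case, the arc $t\in[0,1]$ of \eqref{RatConic} with $j=-1$ runs through the infinite point $t=t_2$ of \eqref{DiscHyp}, so one must check that the M\"obius invariant $\sigma$ of the straightened-out image, normalised in the sense of \eqref{DefSigma}, is the value $\alpha+\beta=\St\sigma$ produced above and is not shifted by $\pm2\pi$. I would settle this in one of two ways: (i) follow the tangent direction $\tau(t)$ continuously across $t=t_2$ — both one-sided limits tend to the common asymptote direction, so $\tau$ extends continuously there — and verify that the resulting turning of $\tau$ from $t=0$ to $t=1$, read against the chord $A=(-1,0)$, $B=(1,0)$, yields a lens of width $\St\sigma\in(0,\pi)$ rather than a rotated copy; or (ii) invoke the convex result for the same hyperbola arc, on whose quadrants I, III branch $\sigma_{(+1)}=\St\sigma+\pi$ (which is why it was discarded there), and then add the further $\pi$ contributed by the flip $j=1\mapsto j=-1$, obtaining $\sigma\equiv\St\sigma+2\pi\equiv\St\sigma$, with the correct representative pinned down by $0<\St\sigma<\pi$.
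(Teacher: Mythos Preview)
Your algebraic reduction is correct and in fact cleaner than the paper's: the identity
\[
   H(p,q;\St\sigma)=h_1h_2\,\sin\!\bigl((\alpha+\beta)-\St\sigma\bigr)
\]
together with $\sin(\alpha+\beta)=2pq/(h_1h_2)>0$ pins down $(\alpha+\beta)\bmod 2\pi=\St\sigma$ in one stroke, without the case analysis the paper performs. You also correctly isolate the genuine obstacle, namely that the M\"obius invariant $\sigma$ of a \emph{discontinuous} arc is not defined directly as $\alpha+\beta$ but only through a M\"obius image that is a regular short spiral.

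Where your proposal falls short is in the two sketched resolutions. Route~(ii) does not work as stated: changing $j$ from $+1$ to $-1$ with $(p,q,w)$ fixed produces a \emph{different} conic (the implicit equation~\eqref{Implicit} depends on $j$), so there is no ``same hyperbola arc'' to which the convex result could be applied; moreover \cite[Prop.\,3]{AKparab} covers only the quadrants~II,\,IV branch, and the quadrants~I,\,III arc with $j=+1$ is not a spiral at all (Section~\ref{sec:SpiralConic}), so its $\sigma$ is not available to be shifted by~$\pi$. Route~(i) is the right instinct but is only asserted; carrying it out still requires identifying the invariant $\sigma$ of an unbounded arc with some cumulative quantity, which is essentially the thing to be proved.

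The paper closes the gap concretely and differently: it applies the specific M\"obius map $z\mapsto 1/z$, which sends the discontinuous conic to a bounded (``lemniscate-like'') short spiral, and then reads off $\sigma=\alpha'+\beta'$ directly. The quadrant of $\alpha$ and $\beta$ is determined from the spirality constraints ($p>1$, $w<0$ for the case $p,q>0$), giving $\alpha,\beta\in(-\pi,-\pi/2)$, whence $\alpha'=\alpha+\pi$, $\beta'=\beta+\pi$ and $\sigma=\alpha+\beta+2\pi\in(0,\pi)$, so $\sigma=\St\sigma$. Your computation would slot in nicely once this last step is replaced by the paper's explicit M\"obius reduction rather than the two heuristics you suggest.
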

\begin{proof}[{\bf Proof}]
By construction, the whole locus $H(p,q;\St\sigma){=}0$ can supply conics with \ $\tan\sigma{=}\tan\St\sigma$, i.\,e.
$\sigma\in\{\St\sigma;\,\St\sigma{\pm}\pi;\,\ldots\}$; we have to reduce the choice
to the first possibility.

Let $(p,q)$, $p>0$, $q>0$, be a point on the locus~\eqref{LocusEq}.
Eqs.\,\eqref{LimCirc2}, \eqref{Wsign} require $p>1$, $w<0$.
This control point generates conic $z(t)$ with
boundary angles $\alpha,\beta$. The conic is discontinuous, and is located within
the unbounded lens of the width~$\sigma$ (shown shaded):
%\centerline{\Infigw{\textwidth}{LocusProof}}
\equa{%
   \Infigw{.72\textwidth}{PicProof}
}
Consider lemniscate-like regular spiral, obtained from~$z(t)$
by the map $\Mob(z(t);\infty)=\frac1{z(t)}$~\eqref{Moebius}.
This map preserves $\sigma$, makes the spiral short (and the lens bounded), thus making $\sigma$ easy to calculate.

The map includes inversion with respect to the unit circle, followed by reflection about the $x$-axis.
Inversion converts $\alpha$ into $-\alpha\pm\pi$, reflection negates the result.
So, tangent angles of the curve-image become $\alpha'=\alpha\pm\pi$, $\beta'=\beta\pm\pi$, 
where ${+}$~or~${-}$ should be chosen simply to put each value into the range $(-\pi;\pi)$.
%, the range for short spirals with increasing curvature, \cite[St.\,3]{AKparab}.
The invariant $\sigma$ is then exactly equal to $\alpha'+\beta'$.
From \eqref{ConicG2} with $p>1$, $w<0$, $j=-1$, we deduce
\equa{%
  \renewcommand{\tmp}{&\Longrightarrow&}
  \begin{array}{lclcl} 
    \cos\alpha<0,\quad\sin\alpha<0\tmp -\pi<\alpha<-\pi/2\tmp \alpha'=\alpha+\pi;\\
    \cos\beta<0, \quad\sin\beta<0 \tmp -\pi<\beta<-\pi/2 \tmp \beta'=\beta+\pi.
  \end{array} 
}
So, $\sigma=\alpha'+\beta'=\alpha+\beta+2\pi$, \ $0<\sigma<\pi$. , \ $\sigma=\St\sigma$.
%\Equa{ABsigma}{
%  \sin\St\sigma=\sin(\alpha{+}\beta) \buildrel{\rm\eqref{ConicG2}}\over= \dfrac{-2jpq}{h_1 h_2},\quad
%  \cos\St\sigma=\cos(\alpha{+}\beta) \buildrel{\rm\eqref{ConicG2}}\over= j\dfrac{1-p^2+q^2}{h_1 h_2}.
%  %\st m=\sgn(b-a).
%}

Taking \LT{q}, we arrive to a conic with $w>0$, the control point in the opposite quadrant,
and the same conclusion for~$\sigma$.
\end{proof}

Let us parametrize locus \eqref{LocusEq} 
in terms of angular parameter $-\pi<\theta=2\nu<\pi$,
which will serve as the parameter of the family of solutions:
\Equa{pqnu}{%
  \aligned
   &p(\theta)=\frac{\sin\St\sigma}{\sin\theta}\equiv\frac{\sin\St\omega\cos\St\omega}{\sin\nu\cos\nu},
   \qquad \nu=\frac{\theta}{2},\\[1.5ex]
   &q(\theta)=-\frac{\cos\St\sigma-\cos\theta}{\sin\theta}\equiv
   \frac{\sin(\St\omega+\nu)\sin(\St\omega-\nu)}{\sin\nu\cos\nu};
 \endaligned\qquad
   \parbox[c]{135pt}{\Infigw{135pt}{Pic23}}
}
The case of infinite control point, $w=0$, omitted in Propositions 1,\,2,
%and described in \cite{AKhyperb},
can now be added to the family as $\theta=0$.
It is the infinite point in the direction of the asymptota $q=p\tan\St\omega$,
shown solid in~\eqref{pqnu},
in either the first $(\theta\to{+}0)$, or the third $(\theta\to{-}0)$ quadrant.
%\equa{%Limww}{%
%  \text{I:}\quad \lim\limits_{\!\!\theta\to{+0}}{\,\frac{w}{\abs{w}}}= -1;\qquad
%  \text{III:}\quad \lim\limits_{\!\!\theta\to{-0}}{\,\frac{w}{\abs{w}}}= +1.
%}
Both cases yield identical solutions.
In \eqref{pqnu} $0\le\theta<\St\sigma$ is accepted, assigning $\theta=0$ to the first quadrant,
in which $w<0$.
Fractions $\frac{w}{\abs{w}}$ in \eqref{ConicG2} take the limit value~$-1$ in this case.
 
The infinite point in the direction of the dashed asymptota
is achieved when $\theta\to\pm\pi$, 
and yields non-spiral elliptic arc~(\ref{NonSpirals}a).
The parameter range $\abs{\theta}\le\Theta_1$, 
where spiral solutions could be found, is limited by either  point $M$
with $p^2+q^2=1$ (\ref{LocusFig}a), 
or point $N$ with $p=1$ (\ref{LocusFig}b):
\Equa{Theta1}{%
   \Theta_1=\left\{\begin{array}{cll}
         \dfrac{\pi}{2}, &\text{if~~} 0<\St\sigma<\dfrac{\pi}{2},&      \text{case (\ref{LocusFig}a)}\\
         \pi-\St\sigma, &\text{if~~} \dfrac{\pi}{2}\le\St\sigma\le\pi,& \text{case (\ref{LocusFig}b)}
   \end{array}\right\}=\min\left(\dfrac{\pi}{2},\pi-\St\sigma\right).
}

The sides $h_{1,2}$ \eqref{h1h2} of the control polygon obey equalities
\equa{%H1H2a}{%
    h_1^2=\dfrac{2-2\cos(\St\sigma+\theta)}{\sin^2\theta},
    \quad     
    h_2^2=\dfrac{2-2\cos(\St\sigma-\theta)}{\sin^2\theta},
%    \quad
%     h_1h_2=2j\frac{\cos\St\sigma-\cos\theta}{\sin^2\theta},
}
or, taking into account signs~\eqref{Wsign} and~\eqref{pqnu},
\equa{%H1H2b}{%
    h_1=j \frac{w}{\abs{w}}\cdot \frac{\sin(\St\omega+\nu)}{\sin\nu\cos\nu},\quad
    h_2=-\frac{w}{\abs{w}}\cdot \frac{\sin(\St\omega-\nu)}{\sin\nu\cos\nu}.%
%   \left[h_1h_2=2j\frac{\cos\St\sigma-\cos\theta}{\sin^2\theta}\right].
}
Boundary G$^2$ data \eqref{ConicG2} now look like
\Equa{ConicG2new}{%
\hspace*{-5mm}%
  \begin{array}{lll}
     \cos\alpha =j\cos(\St\omega-\nu),&
    \sin\alpha = j\sin(\St\omega-\nu),&
    a=-\dfrac{\sin^2\nu\cos^2\nu\sin(\St\omega-\nu) }{w^2\sin^2(\St\omega+\nu)};
    %&g_1=a+\sin\alpha=\qquad;
    \\[2ex]
    \cos\beta =j\cos(\St\omega+\nu),&
    \sin\beta =j\sin(\St\omega+\nu),&
    b=\HM{}\dfrac{\sin^2\nu\cos^2\nu\sin(\St\omega+\nu) }{w^2\sin^2(\St\omega-\nu)}.
    %&g_2=b+\sin\beta=   .
  \end{array}
}

\subsection{Defining invariant $Q$ and weight $w$}

For every control point
% (i.\,e., for every $\theta$)
the proper values of weight $w$ will be found by 
equating inversive invariants $\St Q=Q\;(=g_1g_2+\sin^2\omega)$ for given
%\eqref{DefQ}?? 
and conic G$^2$~data.
Choosing control points on the locus~\eqref{LocusEq} assures $\St\omega=\omega$,
and reduces $\St Q=Q$ to $\St[1]{g}\St[2]{g}=g_1g_2$. From \eqref{ConicG2new}
\Equa{g1g2}{%
  \begin{split}
   g_1&{}=a+\sin\alpha=
         \sin(\St\omega-\nu)\left[j-\frac{\sin^2\nu\cos^2\nu}{w^2\sin^2(\St\omega+\nu)}\right],\\
   g_2&{}=b-\sin\beta=
         \sin(\St\omega+\nu)\left[-j+\frac{\sin^2\nu\cos^2\nu}{w^2\sin^2(\St\omega-\nu)}\right],
   %\\   g_1 g_2&{}=\frac{\sin^4\theta-4jw^2 D_1 \sin^2\theta+4w^4 D_2^2}{8w^4D_2},
   \end{split}                 
   \quad
      g_1 g_2=\frac{\sin^4\theta-4jw^2 D_1 \sin^2\theta+4w^4 D_2^2}{8w^4D_2},
}
\Equa{DDDD}{%
%\hspace*{-12mm}
   \begin{array}{llcll}
   \text{where~~}   
   &D_1=1-\cos\St\sigma\cos\theta&=&\sin^2(\St\omega+\nu)+\sin^2(\St\omega-\nu), &\hphantom{j}D_1>0;\\
   &D_2=\cos\St\sigma-\cos\theta &=&-2\sin(\St\omega+\nu)\sin(\St\omega-\nu), \quad &jD_2>0.\\[1.ex]
   \text{Define also~~}
   &D_3=D_2-2\St[1]g\St[2]g      &=&1-2\St Q-\cos\theta, &\hphantom{j}D_3 > 0;\\
   &D_0=D_1^2-D_2 D_3            &=&\sin^2\theta\sin^2\St\sigma+2\St[1]g\St[2]g D_2.
   \end{array}
}
%or $\St[1]g \St[2]g+\sin^2\St\omega=g_1g_2+\sin^2\St\omega$
Equation $\St[1]{g}\St[2]{g}=g_1g_2$ is biquadratic for the weight~$w$:
\equa{%KKnew}{%
  4w^4 D_2 D_3 -4jw^2{D_1}\sin^2\theta + \sin^4\theta=0,
}
The equivalent equation for $N=\dfrac{w^2}{\sin^2\theta}$, and its roots look like
\Equa{eqN}{%
     4N^2 D_2 D_3 -4jN{D_1} + 1=0,\qquad
     N_{1,2}=\frac12\,\frac{D_1\pm\sqrt{D_0}}{jD_2D_3};\qquad
     w=\pm\sin\theta\sqrt{N}.
}
%Only non-negative roots are of interest.
The term $\sin\theta$, singled-out
% (distinguished) 
in $w=\pm\sin\theta\sqrt{N}$,
will be cancelled out of fractions like
\equa{
   \abs{p_w}=\abs{pw}=\abs{\frac{\sin\St\sigma}{\sin\theta}\cdot\sqrt{N}\,\sin\theta},
}
thus eliminating singularity in treatment the case of infinite control point.
To get proper signs of $w,\, p_w,\, q_w$, we put the sign into the factor $n_w=\pm1$,
making $w<0$ in the first quadrant only~\eqref{Wsign}:
%Eq.\,\eqref{PwQw} results from verification of known signs of $p,q,w$ in every quadrant:
\Equa{PwQw}{%
   n_w=%\begin{cases}+1,\text{~~if~~}\theta>\St\omega,\\-1,\text{~~otherwise;}\end{cases}
       \sgn(\theta-\St\sigma),\quad 
   w=n_w\sin\theta\,\sqrt{N},\quad 
   p_w=n_w\sin\St\sigma\,\sqrt{N},\quad
   q_w=-n_w(\cos\St\sigma-\cos\theta)\sqrt{N}.
}    

Discriminant $D_0(\theta)$ is an even function of $\theta$, positive at $\theta=0$
$[D_0(0)=-4\St[1]g \St[2]g\sin^2\St\omega]$, and monotone decreasing in $[0;\pi]$.
So, the range $\abs\theta \le\Theta_0$, such that $D_0\ge0$,
is given by the equation $D_0(\Theta_0)=0$:
\Equa{Theta0}{%
    \cos\Theta_0=
    \frac{2\St[1]g \St[2]g\cos\St\sigma +\sin^2\St\sigma}%
    {\St[1]g \St[2]g-\sqrt{{\St[1]g}^2{\St[2]g}^2 + 2\St[1]g \St[2]g \sin^2\St\sigma\cos\St\sigma+\sin^4\St\sigma}}.
}
%(the second root, yielding $\cos\Theta_0>1$, was rejected).

\subsection{Defining resulting spiral}

First, we rewrite spirality tests~\eqref{LimCirc} in terms of parameter~$\theta$.
%Below the sign~${\sim}$ is used to mark adding or cancelling positive multiples,
%not affecting evaluation of inequalities.
E.\,g., test~\eqref{LimCirc2},
\equa{%
\hspace*{-5mm}\small% --------------------------------------- to be removed in final typesetting
    j={-}1\;\wedge\; w>0\;\wedge\;K_1(p,q)\le0\quad\text{goes with}\quad
    -\St\sigma<\theta<0\quad (-\St\omega<\nu<0),\quad\text{i.\,e.}\quad
     \sin\theta<0,\quad \sin(\St\omega+\nu)>0, 
}
and can be transformed as
\equa{%
  \begin{split}
    w^2 K_1(p,q)={}&h_1^2w^2+p+1=4\sin^2(\St\omega+\nu)N+\frac{\sin\St\sigma}{\sin\theta}+1=\\
     ={}&\left[\frac{2\sin(\St\omega+\nu)}{\sin\theta}\right]
      \left[2N\sin(\St\omega+\nu)\sin\theta+\cos(\St\omega-\nu)\right]\le 0,
  \end{split}  
}
the first term being negative.
Similarly, the test $j={-1}\;\wedge\; w<0\;\wedge\; K_2(p,q)\le0$
is applied when $0\le\theta<\St\sigma$, and transforms to
\equa{
   h_2^2w^2-p+1\le 0\So 2N\sin(\St\omega-\nu)\sin\theta-\cos(\St\omega+\nu)\le 0.
}
Both, unified for $\abs\theta<\St\sigma$ ($\abs\nu<\St\omega$), take form~\eqref{SpiralTest2}.
Likewise, \eqref{LimCirc1} can be rewritten as~\eqref{SpiralTest1}.

Now let us express in terms of $\nu=\frac12\theta$ parameters $\lambda_0,r_0$ \eqref{Lambda12}.
%of the M\"oebius map.
%Each of two versions of $\lambda_0$ results in the following:
Substitutions \eqref{ConicG2new} for $\lambda_0=\St\alpha-\alpha=\St\omega+\St\gamma-\alpha$
yield
\begin{subequations}\label{R0L0New}
\Equa{LambdaNew}{%
   \cos\lambda_0=j\cos(\St\gamma+\nu), \qquad   \sin\lambda_0=j\sin(\St\gamma+\nu).
}
To calculate $r_0$, combine its two versions~\eqref{Lambda12}: $r_0^2=r_{01}^{\ } r_{02}^{\ }$, i.\,e.
\Equa{r012New}{
  \begin{split}
   r_{01}^{\ }&{}=\frac{g_1}{\St[1]g}\Eqref{g1g2}
   \frac{\sin(\St\omega-\nu)}{\St[1]g}\cdot
     \left[j-\frac{1}{4N\sin^2(\St\omega+\nu)}\right];\\
     %\qquad=\frac{j}2\cdot\frac{s_2(s_1^2-s_2^2+R)}{\St[1]g s_1^2};\\
%     
   r_{02}^{\ }&{}=\frac{\St[2]g}{g_2}\Eqref{g1g2}
     \frac{\St[2]g}{\sin(\St\omega+\nu)}\cdot\left[\frac1{4N\sin^2(\St\omega-\nu)}-j\right]^{-1};\\
     %\quad=2j\frac{\St[2]g s_2^2}{s_1(s_1^2-s_2^2-R)};\\
     %=\frac{j}2\cdot\frac{s_2(s_1^2-s_2^2+R)}{\St[1]g s_1^2};\\
   r_0&{}=\sqrt{r_{01}^{\ } r_{02}^{\ }}=\sqrt{-\frac{\St[2]g}{\St[1]g}}\cdot
         \sqrt{-j\frac{\sin^3(\St\omega-\nu)}{\sin^3(\St\omega+\nu)}}\cdot
         \sqrt{\frac{4N\sin^2(\St\omega+\nu)-j}{4N\sin^2(\St\omega-\nu)-j}};
  \end{split}
}
\end{subequations}

To write explicitely the parametric equation~\eqref{Moebius} of the resulting spiral~$\bar{z}(t)$,
denote
\equa{%Finala}{%
  X_0=r_0-r_0^{-1},\quad Y_0=2\sin\lambda_0,\quad W_0=r_0+r_0^{-1}+2\cos\lambda_0,\quad
  z_0=\dfrac{X_0+\iu Y_0}{W_0},\quad
  \text{and}\quad  W_1=r_0+r_0^{-1}-2\cos\lambda_0.
}
In the final expression functions $X(t),Y(t),W(t)$ from \eqref{RatConic} are abbreviated as $X,Y,W$:
%and abbreviate $X(t),Y(t),Z(t)$ to $X,Y,Z$:
\equa{%Final}{%
     \bar{z}(t)=\frac{(X_0^2 + Y_0^2+ W_0^2)XW + X_0W_0\left(X^2+ Y^2 + W^2\right)+%
     \iu\left[Y_0W_0\left(W^2-X^2-Y^2\right) + \left(W_0^2-X_0^2-Y_0^2\right) Y W\right]}
     {W_0^2 W^2 +2W_0 W\left[X_0 X-Y_0 Y\right]+\left(X_0^2+Y_0^2\right)\left(X^2+Y^2\right)}.
}
Because $X_0^2+Y_0^2=W_0W_1$, \ $W_0$ can be cancelled out:
\Equa{Final4}{%
  \begin{split}
   \bar{z}(t)&{}=\frac{%
     (W_0+ W_1)WX + X_0\left(X^2+ Y^2 + W^2\right)%
     +\iu\left[Y_0\left(W^2-X^2-Y^2\right) + \left(W_0-W_1\right) Y W\right]}%
     {W^2 +2W\left(X_0 X-Y_0 Y\right)+W_1\left(X^2+Y^2\right)}=\\
     &{}=\frac{% 
     r_0\left[(X+W)^2 + Y^2\right]-r_0^{-1}\left[(X-W)^2 + Y^2\right]%
     +2\iu\left[\left(W^2-X^2-Y^2\right)\sin\lambda_0 + 2Y W\cos\lambda_0\right]}%
     {r_0\left[(X+W)^2 + Y^2\right]+r_0^{-1}\left[(X-W)^2 + Y^2\right]\,{}+{}\,
      2\left[\left(W^2-X^2-Y^2\right)\cos\lambda_0 - 2Y W\sin\lambda_0\right]}.     
  \end{split}    
}

\subsection{Step-by-step construction of the family of solutions}\label{sec:StepByStep}

We assume that G$^2$ Hermite data to be matched have been brought to the standard normalized
form~\eqref{GivenKK}.
Below the construction is described step-by-step.

\begin{figure}[t]
\centering%
\Bfig{.8\textwidth}{Fig5-Locus}{Locus \eqref{pqnu} for $\St\sigma=\Deg{30}$;
control points are taken with the step $\Delta\theta=\Deg{2}$;
bullets mark control points, yielding (for given $\St Q=-0.103$) spiral solutions.}%
\end{figure}

\begin{enumerate}%[Step 1.]
\item
%Sec.\,\ref{Sec:Given}:
Calculate  $\St[1]g,\St[2]g$, and invariant $\St Q$ \eqref{DefQ} from given data.
Continue if $\St Q<0$.
\item
If the curvature is decreasing ($\St a>\St b$), convert given data by assignments~\eqref{MakeIncr}.
Bring boundary angles $\St\alpha,\St\beta$ to the range $(-\pi;\pi]$, %\eqref{ABshort}.
and define $\St\sigma$, $\St\omega$, $\St\gamma$, following \eqref{DefSigma}. 
The algorithm is not applicable for
data with wide lens, $\St\sigma>\pi$. 
Try to split the path, and apply the algorithm to each subpath (\cite[Sec.\,9]{AKparab}).
\item
The parameter range,
where spiral solutions could be found (\ref{Theta1},\ref{Theta0}), is
\Equa{Step3}{%
   \abs\theta\le\Theta,\quad \Theta=\min\left(\dfrac{\pi}{2},\pi-\St\sigma,\Theta_0\right).
}   
To scan the range, prepare an array of parameters, 
e.\,g., $\theta\in\{0,\,\pm\Delta\theta,\,\pm2\Delta\theta,\,\ldots\}$,
with some sufficiently small step $\Delta\theta$, avoiding $\theta=\pm\St\sigma$
(to avoid $q=0$).
Note that solution for $\theta=0$ definitely exists.

In \RefFig{Fig5-Locus} control points are chosen with $\Delta\theta=\Deg{2}$; 
sectors $\abs\theta\le\Theta$ are shown shaded.
\item
For every $\theta$ calculate $D_{1,2,3,0}$ \eqref{DDDD}.
Create one or two tuples $\{\theta,j,N\}$ with $N>0$, namely:
\equa{%
   \begin{array}{llll}
     \text{if~~}\abs\theta<\St\sigma,&\text{create}& \Keep{\theta}{j=-1}{N=N_2}; \\
     \text{otherwise}&\text{create}&  \Keep{\theta}{j=+1}{N=N_2}&\text{and~~}\Keep{\theta}{j=+1}{N=N_1};\\
     &\text{here}
     &\displaystyle N_1=\frac12\,\frac{j}{D_1+\sqrt{D_0}},
     &\displaystyle N_2=\frac12\,\frac{D_1+\sqrt{D_0}}{jD_2D_3}.%\;(>0),
   \end{array}     
}
%(only tuples with positive $N$ are included):
%Because $D_1>0$, formulas
%for roots of \eqref{eqN} are recommended for numerical calculations.
%
\item
For every tuple perform spirality test:
\begin{subequations}\label{SpiralTest}
\begin{align}
  %&\text{if~~}j=+1{:}\quad
  &\makebox[0mm][r]{if $j=+1$:}\quad
  \left\{\begin{array}{l}
    [2N\sin(\St\omega+\nu)\sin\theta-\cos(\St\omega-\nu)]
    [2N\sin(\St\omega-\nu)\sin\theta+\cos(\St\omega+\nu)] \ge 0,\\
    2N\sin^2\theta\ge 1\quad\left(\text{i.\,e.~~} w\ge\frac{1}{\sqrt{2}}\right);
  \end{array}\right.    \label{SpiralTest1}\\
  %&\text{if~~}j=-1{:}\quad
  &\makebox[0mm][r]{if $j=-1$:}\quad
      2N\sin(\St\omega-\abs\nu)\sin\abs\theta-\cos(\St\omega+\abs\nu)\le 0.\label{SpiralTest2}
\end{align}     
\end{subequations}    
Reject the tuple, if the test fails. Attach $w,\,p_w,\,q_w$ \eqref{PwQw} to the retained tuples.
\item
For every 6-tuple $\{\theta,\,j,\,N,\,w,\,p_w,\,q_w\}$
there exists a spiral conic arc $z(t)=x(t)+\iu y(t)$~\eqref{RatConic}.
Define parameters $r_0,\,\lambda_0$ \eqref{R0L0New}.
The resulting curve $\bar{z}(t)=\bar{x}(t)+\iu\bar{y}(t)$ is given by \eqref{Final4}.
\end{enumerate}

\noindent%
Returning to decreasing curvature, if it was the case in Step\,2, is done by negating $\bar{y}(t)$.

\section{Reducing rational 4th degree interpolant to 3rd degree}\label{sec:Cubic}

\enlargethispage{1\baselineskip}
Map \eqref{Moebius} can be decomposed into elementary transforms, namely
\equa{%
    \Mob(z,z_0)=
    %\frac{1+zz_0^{-1}}{z+z_0^{-1}}\equiv
    \frac{1-z_0^{-2}}{z+z_0^{-1}}+z_0^{-1}.
}
The last term is responsible for translation,
the numerator performs scaling~+~rotation,
and the denominator includes inversion~+~reflection.
Only inversion affects the degree of the curve-image. 
The center of inversion is the point $z_1=-z_0^{-1}$.

If the center of inversion lyes on the conic, the resulting 4th degree curve is reducible to 3rd degree.
To see it, let us take the center~$z_1$ on the original curve~\eqref{RatConic}:
\equa{%
   z_1=z(T)=\frac{X_1}{W_1}+\iu\frac{Y_1}{W_1},\quad
   X_1=X(T),\quad Y_1=Y(T),\quad W_1=W(T).
}
Inversion+reflection look like
\equa{
     \dfrac{1}{z(t)-z_1}=\dfrac{W_1W(t)}%
     {[\,\underbrace{W_1X(t)-X_1W(t)}_{{}=(t-T)A(t)}\,]+
     \iu[\,\underbrace{W_1Y(t)-Y_1W(t)}_{{}=(t-T)B(t)}\,]}
     =\dfrac{W_1W(t)\left[A(t)-\iu B(t)\right]}%
     {(t-T)\left[A^2(t)+B^2(t)\right]}\,.
}
Polynomials $A(t)$ and $B(t)$ being linear, the resulting curve is 3rd degree rational.\medskip

The center of inversion of map \eqref{Moebius} is
\Equa{X1Y1W1}{%
    z_1=-z_0^{-1}=\dfrac{1+r_0\Exp{\iu\lambda_0}}{1-r_0\Exp{\iu\lambda_0}}=\frac{X_1+iY_1}{W_1},
    \quad\text{where}\quad
    \begin{array}{l}
       X_1=r_0^{-1}-r_0,\quad Y_1=2\sin\lambda_0,\\[1ex]
       W_1=r_0^{-1}+r_0-2\cos\lambda_0.
    \end{array}   
}
Condition that the center \eqref{X1Y1W1} belongs to the conic~\eqref{Implicit},
\equa{
     q^2{X_1^2} - 2pq {X_1Y_1}+\left(p^2+\frac{j}{w^2}-1\right){Y_1^2}+2q{Y_1}{W_1}-q^2W_1^2=0,
}
takes form
\equa{
%  \begin{split}
%    &q q'(r_0+r_0^{-1})+pq(r_0-r_0^{-1})\sin\lambda_0-q^2-{q'}^2
%     +\sin^2\lambda_0\left(p+\frac{j}{w^2}\right)=0,\\
%    &\text{where}\quad q'=q\cos\lambda_0+\sin\lambda_0.
%  \end{split}  
  q q'(r_0+r_0^{-1})+pq(r_0-r_0^{-1})\sin\lambda_0-q^2-{q'}^2
     +\sin^2\lambda_0\left(p+\frac{j}{N\sin^2\theta}\right)=0,\quad
    \text{where}\quad q'=q\cos\lambda_0+\sin\lambda_0,
}
and $w^2$ is replaced according to \eqref{eqN}.
The result is linear in $N^{-1}$, and remains such after substituting
$r_{01}\pm r_{02}^{-1}$ for        $r_{0}\pm r_{0}^{-1}$ \eqref{r012New}.
% $r_{01}$ for $r_0$, and $r_{02}^{-1}$ for $r_0^{-1}$ \eqref{r012New}:
%\equa{%
%  r_{0}^{-1}\pm r_{0}=r_{02}^{-1}\pm r_{01}=
%  \frac{\sin(\St\omega+\nu)}{\St[2]g}\cdot\left[\frac1{4N\sin^2(\St\omega-\nu)}-j\right]\pm
%  \frac{\sin(\St\omega-\nu)}{\St[1]g}\cdot\left[j-\frac{1}{4N\sin^2(\St\omega+\nu)}\right].
%}
%This trick is allowed, because we are going to merge this equation with \eqref{eqN}.
Further substitutions, \eqref{pqnu}, \eqref{LambdaNew}, simplify to
\equa{
   2jN=\frac{1}{\cos\St\sigma-\cos\theta}\cdot\frac{f_1(\theta)}{f_2(\theta)},
}
where $f_{1,2}$ are linear functions of $\cos\theta,\,\sin\theta$.
Conversion to polynomials of $v=\tan\frac\theta{2}$ looks like
\Equa{Nv}{%
   4jN=\frac{1+v^2}{v^2\cos^2\St\omega-\sin^2\St\omega}\cdot
       \frac{A_1 v^2 + B_1 v + C_1}{A_2 v^2 + C_2},
       %\quad\text{where}\quad        v=\tan\frac\theta{2}=\tan\nu,
}
where
\equa{
  \aligned
%   \begin{array}{l}
   %\text{where~~}
   &&B_1&=\St[1]g\St[2]g\sin(\St\alpha{-}\St\beta)+(\St[1]g\sin\St\beta+\St[2]g\sin\St\alpha)\sin(\St\alpha{+}\St\beta);\\[1ex]
   &&A_1-A_2%=2\St[1]g\St[2]g(\cos^2\St\gamma-\cos^2\St\omega)
          &=2\St[1]g\St[2]g\sin\St\alpha\sin\St\beta;\\[1ex]
   &&A_1+A_2&=2\cos^2\St\omega(\St[1]g\St[2]g+\St[1]g\sin\St\beta-\St[2]g\sin\St\alpha);\\[1ex]
   &&C_1-C_2&=2\sin^2\St\omega(\St[1]g\St[2]g+\St[1]g\sin\St\beta-\St[2]g\sin\St\alpha);\\[1ex]
   &&C_1+C_2&=-(A_1+A_2).
%   \end{array}
  \endaligned
}
\Skip{%equa{
   \begin{array}{ll}
   A_1=\St[1]g\St[2]g\cos^2\St\gamma+\cos^2\St\omega(\St[1]g\sin\St\beta-\St[2]g\sin\St\alpha),& 
   A_1-A_2=2\St[1]g\St[2]g(\cos^2\St\gamma-\cos^2\St\omega)
          =2\St[1]g\St[2]g\sin\St\alpha\sin\St\beta;\\[1.5ex]
   B_1=\St[1]g\St[2]g\sin2\St\gamma+\sin2\St\omega(\St[1]g\sin\St\beta+\St[2]g\sin\St\alpha);\\[1.5ex]
   C_1=\St[1]g\St[2]g\sin^2\St\gamma+\sin^2\St\omega(\St[1]g\sin\St\beta-\St[2]g\sin\St\alpha),&
   C_1-C_2=2\sin^2\St\omega(\St[1]g\St[2]g+\St[1]g\sin\St\beta-\St[2]g\sin\St\alpha).
   \end{array}
}
\Skip{%
\equa{%   
\begin{split}
    f_1(\theta)={}&
    2\St[1]g\St[2]g[1-\cos(\St\alpha - \St\beta - \theta)]+
      \St[1]g[\sin(\St\alpha-\theta)-\sin(\St\sigma+\St\beta -\theta)+2\sin \St\beta]-\\
    -&\St[2]g[\sin(\St\beta +\theta)-\sin(\St\sigma+\St\alpha-\theta)+2\sin\St\alpha],\\
   f_2(\theta)={}&
   2\St[1]g\St[2]g(2\cos\St\sigma-\cos2\St\gamma-\cos\theta)-
     \St[1]g[\sin(\St\beta +\theta)+\sin(\St\beta -\theta)-\sin(\St\beta +\sigma)+\sin\St\alpha]+\\
   &+\St[2]g[\sin(\St\alpha+\theta)+\sin(\St\alpha-\theta)-\sin(\St\alpha+\St\sigma)+\sin\St\beta].
\end{split} 
}
}   % Skipped
It remains to substitute $N$ into \eqref{eqN}, rewritten below in terms of~$v$:
\equa{
  \left(\frac{4jN}{1+v^2}\right)^2(v^2\cos^2\St\omega-\sin^2\St\omega)
  [v^2\cos^2\St\omega-\sin^2\St\omega-g_1g_2(1+v^2)]
  -\frac{8jN}{1+v^2}(v^2\cos^2\St\omega+\sin^2\St\omega)+1=0
}
We obtain the 6th degree algebraic equation
\Equa{EQ6}{%
   \aligned
   \cos^2\St\omega&[(A_1{-}A_2)v^2+B_1v+C_1{-}C_2]^2 v^2-{}\\
   -\sin^2\St\omega&[(A_1{+}A_2)v^2+B_1v+C_1{+}C_2]^2-
    \St[1]g\St[2]g (A_1v^2+B_1v+C_1)^2(1+v^2)=0.  
   \endaligned 
}
\Skip{%
We obtain the 6th degree algebraic equation $v_6v^6+v_5v^5+\ldots v_0=0$:
\equa{
  \begin{array}{l}
     v_6=(A_1-A_2)^2\cos^2\omega-g_1g_2A_1^2,\\[1.5ex]
     v_5=4B_1(A_1-A_2)\cos^2\omega-4g_1g_2A_1B_1,\\[1.5ex]
     v_4=(A_1+A_2)^2\cos^2\omega-(A_1-A_2)^2-g_1g_2(A_1^2+2A_1C_1+4B_1^2)+
        2[2B_1^2+(C_1-C_2)(A_1-A_2)]\cos^2\omega,\\[1.5ex]
     v_3=4B_1[2(A_1+C_1)\cos^2\omega-g_1g_2(A_1+C_1)-(A_1+A_2)],\\[1.5ex]
     v_2=(C_1-C_2)^2\cos^2\omega-g_1g_2(C_1^2+2A_1C_1+4B_1^2)-
        2[2B_1^2+(C_1+C_2)(A_1+A_2)]\sin^2\omega,\\[1.5ex]
     v_1=-4B_1(C_1+C_2)\sin^2\omega-4g_1g_2C_1B_1,\\[1.5ex]
     v_0=-(C_1+C_2)^2\sin^2\omega-g_1g_2C_1^2.
  \end{array} 
}
}

\begin{figure}[t]
\centering%
\Bfig{.92\textwidth}{Fig6-Cubics}{%
Existence of rational cubics, inversions of conics.
%for $\St\sigma=\Deg{40}$ and  $\St\sigma=\Deg{120}$.
Dots mark regions in curvature space $(a,b)$
for fixed pairs $\alpha,\,\beta$,
where Eq.\eqref{EQ6} yields rational cubic.
% (i.\,e. $N>0$).
Heavy dots (black circles) mark spiral solutions.
}%
\end{figure}

Finding roots of polynomials does not pose numerical problems.
For each real root~$v$ define \hbox{$\nu=\arctan v$}, $\theta=2\nu$, $j=\sgn(\abs{\theta}-\St\sigma)$,
and $N$ from~\eqref{Nv}. Keep only roots, yielding $N>0$.
If the solution passes spirality test \eqref{SpiralTest},
define $r_0$, $\lambda_0$.
As the center~\eqref{X1Y1W1} is the point $x(T),y(T)$ on the conic~\eqref{RatConic}, 
$T$~can be found from the system of equations
\equa{%
         W_1{\cdot}X(T)=X_1{\cdot}W(T) \quad\wedge\quad W_1{\cdot}Y(T)=Y_1{\cdot}W(T),
}
considered as linear system in $T$ and~$T^2$:
\equa{
  \begin{split} 
   T%=-\frac{y(j-p_w)+q_w(1+x)-wy}{j[-p_wy+q_w(-1+x)+(w-2)y] + p_w y -q_w(1+x)+wy}=\\
    &{}=\frac{q_w(X_1+W_1)+(j-p_w-w)Y_1}{q_w[X_1+W_1-j(X_1-W_1)]+[j(p_w-w+2)-(p_w+w)]Y_1}=\\
    &{}=\frac{(p_w+w-j)\sin\lambda_0+q_w(\cos\lambda_0-r_0^{-1})}%
          {[p_w+w-j(p_w-w+2)]\sin\lambda_0+q_w(1+j)\cos\lambda_0-q_w(r_0^{-1}+jr_0)}.
  \end{split}  
}
Curve \eqref{Final4} becomes cubic after cancellation of $(t-T)$ from its numerator and denominator.
\enlargethispage{\baselineskip}
To express it explicitely, denote\vspace{-\baselineskip}
\equa{%
  \Ber{n}{x}{a_0,\ldots,a_n}=\sum\limits_{i=0}^{n}a_i(1-x)^{n-i}x^i,\quad\text{and}\quad
  \bar{z}(t)=\dfrac{X_3(t)+\iu Y_3(t)}{W_3(t)}:
}  
\equa{%
\aligned
     &\aligned 
      X_3(t)=\ber{3}{t}\Big(%
     &       \Ber{3}{T}{0,\, h_{1_w}^2, \, 2j(p_w+w),\,1},\\
     &       \Ber{1}{T}{1,\,-2(p_w-w)}\cdot\Ber2{T}{ h_{1_w}^2, \,2j(p_w+w),\,1},\\
     &       \Ber1{T}{2j(p_w+w),\,\,1\,}\cdot\Ber2{T}{1,\,-2(p_w-w),\, h_{2_w}^2},\\
     &       \Ber3{T}{1,\,-2(p_w-w),\, h_{2_w}^2, \, 0}\Big);
     \endaligned
     \\[1ex]
     &Y_3(t)=2q_w\cdot\Ber{3}{t}{0,\:%
     \Ber{3}{T}{0, \, j- h_{1_w}^2,\,  -2jp_w, \, 0},\:
     \Ber{3}{T}{0, \, -2jp_w,\,jh_{2_w}^2-1,\,0},\:0};
     \\[1ex]
%%%\endaligned
%%%}          % Equation broken into 2 parts just to allow pagebreak
%%%\equa{            
%%%\aligned
     &\aligned
        W_3(t)=(t-T)\cdot\ber{2}{t}\Big(%
        &  \Ber{2}{T}{ h_{1_w}^2,\,2j(p_w+w),\,1},
          2\Ber{2}{T}{j(p_w+w),\, 1-j(p_w^2+q_w^2-w^2),\,-(p_w-w)},\\
        &  \Ber{2}{T}{1,\,-2(p_w-w),\,h_{2_w}^2}\Big).
     \endaligned   
\endaligned
}

\Skip{
\equa{%Final3}{%
   \aligned       %  \left[ \right]
     X_3(t)&{}=(1-t)^3\cdot T \left[(1-T)^2 h_{1_w}^2+2T(1-T)j(p_w+w)+T^2 \right] +\\
           &{}+t(1-t)^2\cdot\left[1-T+2T(w+p_w)\right]\left[(1-T)^2 h_{1_w}^2+2T(1-T)j(p_w+w)+T^2 \right]+\\
           &{}+t^2(1-t)\cdot\left[T+2j(1-T)(w+p_w)\right]\left[(1-T)^2-2T(1-T)j(p_w-w)+T^2 h_{2_w}^2 \right]+\\
           &{}+t^3\cdot(1-T)\left[(1-T)^2+2T(1-T)(p_w-w)+T^2 h_{2_w}^2 \right];\\[1.5ex]
     Y_3(t)&{}=2t(1-t)T(1-T)q_w\cdot
           \left\{(1-t)\left[(1-T)(j-h_{1_w}^2)-2Tjp_w\right]
             +t\left[T(jh_{2_w}^2-1)-2(1-T)jp_w \right]\right\};\\[1.5ex]
     W_3(t)&{}=-(1-t)^3\cdot T \left[(1-T)^2 h_{1_w}^2+2T(1-T)j(p_w+w)+T^2 \right] +\\
           &{}+t(1-t)^2\cdot\left[(1-T)^3 h_{1_w}^2+T^2(1-T)(2j(p_w^2+q_w^2-w^2)-1)-2T^3(w-p_w) \right]+\\
           &{}+t^2(1-t)\cdot\left[2(1-T)^3j(p_w+w)-T(1-T)^2(2j(p_w^2+q_w^2-w^2)-1)-T^3 h_{2_w}^2) \right]+\\
           &{}+t^3\cdot(1-T)\left[(1-T)^2-2T(1-T)(p_w-w)+T^2 h_{2_w}^2 \right];
   \endaligned
}  
}

\RefFig{Fig6-Cubics}
illustrates existence of cubic solutions
as regions in the curvature space $(a,b)$.
%($-4<\St1{g}<0$, $0<\St1{g}<4$), 
Every plot is prepared for a fixed pair $\{\alpha,\beta\}$
with either $\sigma=\Deg{40}$ or $\sigma=\Deg{120}$.
The region $(a,b)$, allowing existence of general spiral,
is bounded by the branch $a<b$ of the hyperbola $Q(a,b)=0$.
Dots mark points $(a,b)$,
where solutions of Eq.~\eqref{EQ6} with $N>0$ exist.
Heavy dots distinguish spiral solutions.
Swapping $\alpha$ and $\beta$ would result in symmetric picture.
Plots in the right panel with $\alpha<0$ and $\beta<0$ assume either $\alpha\ceq\alpha+2\pi$, or $\beta\ceq\beta+2\pi$
\eqref{DefSigma}.
For $\sigma=\Deg{120}$ no solutions were found with $\{\alpha,\beta\}=\{\Deg{60},\Deg{60}\}$,
$\{\Deg{80},\Deg{40}\}$,
$\{\Deg{100},\Deg{20}\}$,
$\{\Deg{120},\Deg{0}\}$.
\smallskip

\begin{figure}[t]
\centering%
\Bfig{.8\textwidth}{Fig7-Dietz}{(a) Comparison of this algorithm with \cite{DietzRational},
and the selected solution (the lower left black circle); 
(b,c,d)~details of the selected solution:
(b)~the conic, its control polygon $APB$, 
bounding circle~\eqref{LimCirc2}, and the circle of inversion;
(c)~solution itself; (d)~its curvature plot.
}%
\end{figure}
 
Another approach to construct rational cubics was proposed in \cite{DietzRational}.
To compare results, we partially
reproduce Figure\,7 from \cite{DietzRational} as \RefFig[(a)]{Fig7-Dietz} herein.
First, note
that notation $\phi_0,K_0,\phi_1,K_1$ for boundary conditions corresponds to our notation
as $\St\alpha=-\phi_0$, $\St\beta=\phi_1$, $\St a=\frac12 K_0$, $\St b=\frac12 K_1$:
the difference in normalized curvatures is due to different chord lengthes:
our curve starts from $(-1,0)$, not from $(0,0)$, and has the chord length~2.
Both scales, $(a,b)$ and original $(K_0,K_1)$, are shown in \RefFig[(a)]{Fig7-Dietz}.
Squares are simply copied from the original figure,
where they mark curvatures, for which a rational cubic spiral
was found in \cite{DietzRational}.
%Almost all such curves were found as solutions of~\eqref{EQ6};
%negligible boundary differences are possibly due to a bit different grid steps,
%estimated visually from the original figure.

Comparison with three other examples, Figures 8, 9, 10 in \cite{DietzRational},
shows regions with solutions, found in \cite{DietzRational}, 
and not found by this algorithms.
But the general feature is that the inversion of conics finds more convex cubic spirals,
and, additionally, non-convex ones.

\begin{figure}[t]
\centering%
\Bfig{.92\textwidth}{Fig8-Cornu}{Three examples with symmetric boundary conditions,
$\St\alpha=\St\beta=\Deg{45}$, $\St a=-\St b$.
Curvatures $\pm2.2$ match the arc of Cornu spiral.
}%
\end{figure}

One of solutions in \RefFig[(a)]{Fig7-Dietz}
%, not found in  \cite{DietzRational},
is selected for detailed illustration, and as the numerical example.
The boundary angles and curvatures are
$\St\alpha=-0.1\;[\approx\Deg{-5.7}]$, $\St\beta=1.5\;[\approx\Deg{85.9}]$,
$\St{a}=0.0$,
$\St{b}=8.26$.
Equation~\eqref{EQ6} transforms to
\equa{%
    v^6 - 1.34748v^5 - 0.942759v^4 + 1.02859v^3 - 0.042459v^2  + 0.318889 v + 0.056006 = 0.
}
One of its roots, $v\approx-0.1582$, yields cubic rational spiral with 
$\theta\approx{-0.3137}\,[{\approx}\,\Deg{-17.97}]$,
$j=-1$, $N\approx1.861>0$
%($p\approx-3.19$, $q\approx-2.53$,
($p_w\approx-1.3445$, $q_w\approx-1.0659$,
 $w\approx 0.4210$, $\lambda_0\approx2.185$, $r_0\approx11.38$, $T\approx-0.0612$).
In \RefFig[(b)]{Fig7-Dietz} the conic (discontinuos hyperbola) is shown by dotted and solid lines,
solid for the parameter range $t\in[0;1]$.
The curve and its curvature plot are shown in Figures \Figref{Fig7-Dietz}(c) and~\Figref{Fig7-Dietz}(d).

\section{Conclusions}

The general algorithm, involving all possible conics, turned out to be quite simple and
straightforward;
solving biquadratic equation seems to be its most complicated part.

Testing the algorithm with different boundary conditions,
borrowed from known spirals, such as
logarithmic spiral (\RefFig{Fig2-LogSp}), 
Cornu spiral (the second example in \RefFig{Fig8-Cornu}),
other spiral curves, including conic arcs themselves,
has shown that the whole family did not deviate much from the
parent spiral. 
%Neither do curvature profiles.
Visual comparison is often sufficient to select the best interpolant
to a given curve.

The initial idea to provide
% broad selection
wide variety
of shapes
is not put into big effect:
in most cases the whole family of interpolating spirals
occupies rather narrow region within the bilens,
which is the exact bound for all possible spiral interpolants (see \cite{Bilens}).
Bilenses are shown shaded in \RefFig{Fig8-Cornu}.
Nevetherless, there remains a big freedom to modify the path (and curvature profile)
by choosing intermediate curvature element
at some point~$M$ within the bilens.
With two families of interpolants, one on the chord $AM$, the other on $MB$,
the user can try to fulfil additional requirements,
like, e.\,g., G$^3$ continuity at the join point.

Analysis in Section\,4{.}2 shows
that the solution with infinite control point
%for $\theta=0$
%, described separately in \cite{AKhyperb},
covers the most wide range of boundary G$^1$ data, namely, $\abs{\St\sigma}\le\pi$.
According to \cite{AKhyperb}, the solution exists for any boundary curvatures, compatible with spirality ($\St Q<0$).
This solution could be recommended as the universal one for
the cases, where extra freedom is not needed.

%\section*{References}

\end{document}